\newtheorem{theorem}{Theorem}
\theoremstyle{plain}
\newtheorem{corollary}{Corollary}
\newtheorem{definition}{Definition}
\newtheorem{lemma}{Lemma}
\newtheorem{remark}{Remark}
\numberwithin{equation}{section}
\newcommand{\F}{{\mathbb F}}
\begin{document}
\title[Cycles, wheels, and gears in finite planes]{Cycles, wheels, and gears in finite planes}
\author{Jamie Peabody}
\address{Department of Mathematics\\
California State University, Fresno \\ 
Fresno, CA 93740.}
\email{jpeabody09@mail.fresnostate.edu}
\author{Oscar Vega}
\address{Department of Mathematics\\
California State University, Fresno \\ 
Fresno, CA 93740.}
\email{ovega@csufresno.edu}
\author{Jordan White}
\address{Department of Mathematics and Statistics\\
California State University, Monterey Bay \\ 
Seaside, CA 93955.}
\email{jorwhite@csumb.edu}
\thanks{This work was supported by NSF Grant \#DMS-1156273 (Fresno State's 2012 Summer REU), and the McNair Program at Fresno State}
\date{}

\subjclass[2010]{Primary 05; Secondary 51}
\keywords{Graph embeddings, finite projective plane, primitive element.}

\begin{abstract}
The existence of a primitive element of $GF(q)$ with certain properties is used to prove that all cycles that could theoretically be embedded in $AG(2,q)$ and $PG(2,q)$ can, in fact, be embedded there (i.e. these planes are `pancyclic'). We also study embeddings of wheel and gear graphs in arbitrary projective planes.
\end{abstract}

\maketitle

\section{Introduction}

In this article, a graph will be understood to be simple, finite, and undirected.  Since we will mostly focus on cycles and cycle-related graphs we define, for $k \ge 3$, a $k$-cycle as the graph with $V = \{x_1,\ldots ,x_k\}$ and $E = \{x_1x_2, x_2x_3, \ldots ,x_{k-1}x_k, x_kx_1 \}$. We denote this graph by $C_k$. We refer the reader to  \cite{W} for any graph theoretical notion we use and fail to define.

Next we define the concepts in finite geometry that we will need later on, any missing concepts may be found in \cite{Demb}.

\begin{definition}
Let $\pi = ( \mathcal{P}, \mathcal{L}, \mathcal{I} )$ where $\mathcal{P}$ is a set of points, $\mathcal{L}$ is a set of lines, and $\mathcal{I}$ is an incidence relation. Then $\pi$ is an affine plane if it satisfies the following conditions:  \\
\textbf{(a)}  Given any two distinct points, there is exactly one line incident with both of them. \\
\textbf{(b)}  For every line $l$ and and every point $P$ not incident with $l$ there is a unique line $m$ that is incident with $P$ and that does not intersect $l$. \\
\textbf{(c)}  There are three points that not on the same line.
\end{definition}

We may obtain a projective plane from the affine plane by the addition of a line at infinity, denoted $\ell_{\infty}$. Furthermore, lines which were parallel with one another in the affine plane, meet at a point at infinity in the projective planes. Lastly, these points at infinity are all incident with the line at infinity. Conversely, deleting any line in a projective plane (and all points incident with that line) yields an affine plane.

In this work we consider projective planes that contain only a finite number of points and lines.  In this case, it is known that for every affine plane $\pi = ( \mathcal{P}, \mathcal{L}, \mathcal{I} )$ there is a positive integer $q$, called the order of the plane, such that $|\mathcal{P}| = q^2$, $|\mathcal{L}| = q^2+q$, each line contains exactly $q$ points, and  every point is incident with exactly $q+1$ lines. A similar result is also valid for projective planes. In this case the addition of $\ell_{\infty}$ yields $|\mathcal{P}| =|\mathcal{L}|= q^2+q+1$, that every line contains $q+1$ points, and that every point is incident with $q+1$ lines. All known examples of finite planes have order equal to the power of a prime number.

When a finite affine/projective plane may be coordinatized by a field we say that the plane is Desarguesian. It is known that there is only one, up to isomorphism, Desarguesian plane of each order. We denote the affine plane of order $q$ by $AG(2,q)$ and the projective one by $PG(2,q)$. It follows that we may `model' $AG(2,q)$ by using $GF(q) \times GF(q)$, where $GF(q)$ is the finite field with $q$ elements. \\

Our objective in this article is to study how cycles, and some cycle-related graphs, can be embedded in finite planes (both affine and projective). In order to do this we must define what we will understand for an embedding of a graph into a finite plane.

\begin{definition}
Let $G=(V,E)$ be a graph. An embedding of $G$ into a plane (affine or projective)  $\pi=(\mathcal{P}, \mathcal{L}, \mathcal{I})$, is an injective function $\psi:V\rightarrow \mathcal{P}$ that induces naturally an (also) injective function $\overline{\psi}:E\rightarrow \mathcal{L}$ by preserving incidence. \\
We call $\psi$ an \emph{embedding of $G$ in $\pi$}. If such a function  exists, we say that \emph{G embeds in $\pi$} and write $G\hookrightarrow\pi$.
\end{definition}

Note that since $\overline{\psi}$ is injective, then we will identify edges in $G$ with \emph{whole} lines. That is, if a line has been used as an edge for a graph, this line cannot be used again in the same embedding.

\begin{definition}
We say that $AG(2,q)$ is pancyclic if and only if $C_{k}\hookrightarrow AG(2,q)$, for all $3\leq k\leq q^2$. Similarly, we say that $PG(2,q)$ is pancyclic if and only if $C_{k}\hookrightarrow PG(2,q)$, for all $3\leq k \leq q^2+q+1$.
\end{definition}

\bigbreak

The idea of embedding a graph into other structures has been present for a long time. For instance, the history of embeddings of graphs into linear spaces goes back to Hall \cite{Hall43}, includes Erd\"os \cite{Erd79}, and the more recent work by Moorhouse and Williford \cite{MoorWil2009}. On the other hand, not much is known about embeddings of graphs in finite planes, and most of what is known is on embeddings of cycles. This is, most probably, because studying $k$-cycles embedded in a projective plane $\pi$ is equivalent to study embeddings of $(2k)$-cycles in $Levi(\pi)$.  For instance, one can use the Singer cycle in $PG(2,q)$ to construct a $(q^2+q+1)$-cycle in $PG(2,q)$ (e.g. see \cite{LMV09}). Also, the constructions by  Schmeichel \cite{Schm89} proved that $PG(2,p)$ is pancyclic for $p$ prime. Moreover, Schmeichel's `longest' cycle is different from the one constructed using the Singer cycle. More recently, in \cite{LMV09} one may find expressions for the number of $k$-cycles in a projective plane of order $q$, for $3\leq k \leq 6$. This work has been extended by Voropaev \cite{Vor12} to $7\leq k \leq 10$.

Our work may also be related to \cite{LMV12}, as in that article embeddings of cycles in projective planes are also studied. However, the approach in \cite{LMV12} is purely geometrical, and our approach in this paper is an effort to bring an algebraic perspective to the problem. In fact, we will coordinatize $AG(2,q)$ using a field to give an algebraic characterization of the pancyclicity of $AG(2,q)$ and $PG(2,q)$. We refer the reader to \cite{LMV09} and  \cite{LMV12} for a thorough historical narrative on this problem.

\section{Cycles in $AG(2,q)$ and $PG(2,q)$}

In this section we investigate pancyclicity in $AG(2,q)$ and $PG(2,q)$ by modifying the construction of cycles in \cite{LMV12}.

Let $\F=GF(q)$ and let $<\alpha> =\F^*$, that is, $\alpha$ is a primitive element of $\F$. In this section we denote $AG(2,q)$ by $\pi$.

We will consider the following  coordinatization of $\pi$ by using $\F\times \F$. The points on its `axes' will be labeled using $0$ or powers of $\alpha$. Next we label the lines  through $\mathcal{O}=(0,0)$ as follows:
\[
l_0 : x=0 \hspace{.9in} l_i: y=x\alpha^i , \ \ \text{for} \ i = 1, \cdots , q-1 \hspace{.9in} l_q: y=0
\]
Also, for any point $Q$ in the plane we denote the line parallel to $l_i$ that passes through $Q$ by $l_{i}+Q$.

Pick any point $P_0\in l_0$, different from $\mathcal{O}$. We define $P_i= (l_{i+1}+P_{i-1})\cap l_i$, for all $i=1, \cdots , q-1$. Similarly, $P_q= (l_{0}+P_{q-1})\cap l_q$. Next we connect $P_{i-1}$ with $P_{i}$ using $l_{i+1}+P_{i-1}$, for all $i=1, \cdots , q-1$, and connect $P_{q-1}$ with $P_q$ using $l_{0}+P_{q-1}$. In this way we obtain a path of length $q+1$. We denote this path by $\mathcal{P}_{P_0}$.

In \cite{LMV12} it is shown that the $q-1$ paths constructed this way, with $P_0$ being any point on $l_0$ different from $\mathcal{O}$ share no points or lines. Hence, these paths partition the points of $\pi \setminus \{\mathcal{O}\}$. Moreover, a path starting at $(0, \beta)$ may be obtained from the path starting at $(0, \alpha)$ by using a translation $T_v$ with $v=(0, \beta -\alpha)$.

Note that no line parallel to $l_1$ has been used in the construction of these paths. Hence, using the line $l_{1}+P_{q}$ we connect $P_q$ with a (uniquely determined) point $Q_0$ on $l_0$. Note that if $Q_0=P_0$ then we get a cycle of length $q+1$. On the other hand, if  $Q_0\neq P_0$ then we may `glue' $\mathcal{P}_{Q_0}$ to the path starting at $P_0$ and ending at $Q_0$ we have just built, yielding a longer path. It seems that  when $P_0\neq Q_0$ we are able to create `long' cycles. But, how long?  In order to answer this question we need to study when $P_0\neq Q_0$. We will do this algebraically, by identifying $\pi$ with $\F\times \F$.  

\begin{lemma} \label{lem1}
Let $P_0 = (0, \beta) \in \pi$ then 
\[
P_{i+1} \ = \ \ (y = \alpha^{i+2}x+\beta(1+\alpha)^i)\cap(y=\alpha^{i+1}x) \ \ = \ \ \left(\dfrac{\beta(1+\alpha)^i}{\alpha^{i+1}(1-\alpha)}, \dfrac{\beta(1+\alpha)^i}{1-\alpha}\right)
\]
for all $0 \le i \le (q-2)$. Also,
\[
P_q = \left(\dfrac{\beta}{(1+\alpha)^{2}(1-\alpha)},0 \right)  \hspace{1in}  Q_0 =  \left(0, \dfrac{-\alpha\beta}{(1-\alpha)(1+\alpha)^2}\right) 
\]
\end{lemma}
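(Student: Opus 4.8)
The plan is to establish the coordinates of the $P_i$ by induction on $i$ and then to compute $P_q$ and $Q_0$ as separate endpoint calculations, since toward the end of the construction the connecting lines stop obeying the slope pattern $\alpha^{i+2}$. Before starting I would record the only facts about $\F$ that the argument uses: since $\alpha$ generates $\F^{*}$ we have $\alpha^{q-1}=1$, and, provided $1+\alpha\neq 0$ (which holds for all $q\ge 4$), Lagrange's theorem in $\F^{*}$ also gives $(1+\alpha)^{q-1}=1$. I would also isolate the factorization $1-\alpha^{2}=(1-\alpha)(1+\alpha)$, which is precisely what makes the powers of $(1+\alpha)$ materialize.

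For the induction, the base case $i=0$ is the direct intersection of $y=\alpha^{2}x+\beta$ with $y=\alpha x$, giving $P_{1}=\left(\frac{\beta}{\alpha(1-\alpha)},\frac{\beta}{1-\alpha}\right)$. For the inductive step I would use that, by definition, $P_{i+1}=(l_{i+2}+P_{i})\cap l_{i+1}$, where $l_{i+1}$ is $y=\alpha^{i+1}x$ and $l_{i+2}+P_{i}$ is the line through $P_{i}$ of slope $\alpha^{i+2}$. The crux is the $y$-intercept of $l_{i+2}+P_{i}$: feeding in the inductive formula for $P_{i}$ gives $y_{P_{i}}-\alpha^{i+2}x_{P_{i}}=\frac{\beta(1+\alpha)^{i-1}}{1-\alpha}(1-\alpha^{2})=\beta(1+\alpha)^{i}$, by the factorization above. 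Intersecting $y=\alpha^{i+2}x+\beta(1+\alpha)^{i}$ with $y=\alpha^{i+1}x$ then yields exactly the claimed coordinates. This step is valid as long as $l_{i+2}$ genuinely has slope $\alpha^{i+2}$, i.e. as long as $i+2\le q-1$.

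The delicate part, and where I expect the real bookkeeping to live, is the wrap-around at the end of the ladder: the connecting line $l_{q}$ has slope $0$ rather than $\alpha^{q}$, and $l_{0}$ is vertical, so the generic step breaks and $P_{q-1}$, $P_{q}$, $Q_{0}$ must be handled by hand. Concretely, $P_{q-1}=(l_{q}+P_{q-2})\cap l_{q-1}$ is the meet of the horizontal line through $P_{q-2}$ with $l_{q-1}\colon y=\alpha^{q-1}x=x$; then $P_{q}=(l_{0}+P_{q-1})\cap l_{q}$ is the foot of the vertical through $P_{q-1}$ on the $x$-axis; and finally $Q_{0}=(l_{1}+P_{q})\cap l_{0}$ is reached by running the slope-$\alpha$ line from $P_{q}$ back to the $y$-axis. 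Substituting the $P_{q-2}$ formula and simplifying the resulting powers with $\alpha^{q-1}=1$ and $(1+\alpha)^{q-1}=1$ (so that $(1+\alpha)^{q-3}=(1+\alpha)^{-2}$) collapses the coordinates to $\frac{\beta}{(1+\alpha)^{2}(1-\alpha)}$ for $P_{q}$ and $\frac{-\alpha\beta}{(1-\alpha)(1+\alpha)^{2}}$ for $Q_{0}$. The single point to watch is exactly this transition: because $l_{q}$ and $l_{0}$ do not continue the $\alpha^{i+2}$ pattern, the endpoints must be derived from the horizontal and vertical connecting lines rather than by pushing the generic formula one index too far.
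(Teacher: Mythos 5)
Your proposal is correct, and its core is the same as the paper's proof: an induction whose key step is computing the $y$-intercept of the connecting line through $P_i$, namely $y_{P_i}-\alpha^{i+2}x_{P_i}=\beta(1+\alpha)^{i-1}\frac{1-\alpha^2}{1-\alpha}=\beta(1+\alpha)^i$, and then intersecting with $l_{i+1}$. Where you genuinely go beyond the paper is at the wrap-around: the paper's proof dismisses $P_q$ and $Q_0$ with ``we obtain what $P_q$ and $Q_0$ look like using similar procedures,'' whereas you carry out the last three steps explicitly (horizontal line $l_q+P_{q-2}$ to meet $l_{q-1}:y=x$, vertical drop to $l_q$, slope-$\alpha$ line back to $l_0$) and reduce exponents via $(1+\alpha)^{q-3}=(1+\alpha)^{-2}$. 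This extra care pays off in two ways. First, your validity condition $i+2\le q-1$ shows that the generic formula actually holds only for $0\le i\le q-3$, not $0\le i\le q-2$ as the lemma asserts: at $i=q-2$ the connecting line is $l_q:y=0$ rather than a line of slope $\alpha^q$, and the construction then gives $P_{q-1}$ with the factor $(1+\alpha)^{q-3}$ instead of the $(1+\alpha)^{q-2}$ that the displayed formula would predict --- and it is precisely the $(1+\alpha)^{q-3}$ value that makes the stated $P_q$ and $Q_0$ come out right. So your proof establishes the correctly amended statement and exposes an off-by-one in the lemma's range that the paper's own terse proof passes over. Second, your observation that $1+\alpha\neq 0$ (hence $q\ge 4$) is needed for $(1+\alpha)^{q-1}=1$ makes explicit a hypothesis the paper never states, even though its formulas divide by $(1+\alpha)^2$.
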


\begin{proof}
Let $p_0 = (0, \beta)$, then
\[
P_1 = (y = \alpha^2 + \beta)\cap(y = \alpha x) = \left(\dfrac{\beta}{\alpha(1-\alpha)}, \dfrac{\beta}{(1-\alpha)} \right)
\]
In general,
\[
P_{i+1} = (y = \alpha^{i+2}x+b)\cap(y=\alpha^{i+1}x)
\]
We find $b$ by plugging the coordinates of $P_{i}$ into $y = \alpha^{i+2}x+b$. We get:
\[
b \ = \ \ \beta(1+\alpha)^{i-1}\Bigg(\dfrac{\alpha^i-\alpha^{i+2}}{\alpha^i(1-\alpha)}\Bigg)  \ \ = \ \  \beta(1+\alpha)^{i}
\]
So,
\[
P_{i+1} = (y = \alpha^{i+2}x+\beta(1+\alpha^i))\cap(y=\alpha^{i+1}x)
\]

We solve for $x$ to get
\[
x = \dfrac{\beta(1+\alpha^i)}{\alpha^{i+1}(1 - \alpha)}
\]
It follows that, for $1 \le i \le (q-2)$,
\[
P_{i+1} = \left(\dfrac{\beta(1+\alpha)^i}{\alpha^{i+1}(1-\alpha)}, \dfrac{\beta(1+\alpha)^i}{(1-\alpha)}\right)
\]

We obtain what $P_q$ and $Q_0$ look like using similar procedures.
\end{proof}

The previous lemma proves that each of the paths of the form $\mathcal{P}_{P_0}$ starts at $(0, \beta)$ and `comes back' to $l_0$ at $\left(0, \dfrac{-\alpha\beta}{(1-\alpha)(1+\alpha)^2}\right) $. Note that this behavior is being dictated by the action of the group $\F^*$ on itself defined by 
\[
\alpha \cdot \beta   =   \dfrac{-\alpha}{(1-\alpha)(1+\alpha)^2} \ \beta
\]
The following result is almost immediate.

\begin{theorem}\label{thmmain1}
Assume that there is a primitive element $\alpha \in \F$ such that 
\[
\gamma =\dfrac{-\alpha}{(1-\alpha)(1+\alpha)^2}
\]
is also primitive. Then,  $C_{q^2-1} \hookrightarrow  \pi$.
\end{theorem}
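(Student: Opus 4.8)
The plan is to read the construction preceding Lemma~\ref{lem1} as a discrete dynamical system on the punctured axis $l_0\setminus\{\mathcal O\}$ and to let the primitivity of $\gamma$ do all of the combinatorial work. By Lemma~\ref{lem1}, the path $\mathcal P_{P_0}$ issuing from $P_0=(0,\beta)$ terminates on $l_0$ at the point $(0,\gamma\beta)$; identifying $l_0\setminus\{\mathcal O\}$ with $\F^*$ via the second coordinate, the ``return map'' $\beta\mapsto\gamma\beta$ is simply multiplication by $\gamma$. First I would record that, because $\gamma$ is assumed primitive, this map is a single cyclic permutation of $\F^*$ of order $q-1$: for any fixed starting value $\beta_0$ the elements $\beta_0,\gamma\beta_0,\dots,\gamma^{\,q-2}\beta_0$ exhaust $\F^*$ while $\gamma^{\,q-1}\beta_0=\beta_0$.

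Next I would glue the paths in this cyclic order. Writing $\beta_j=\gamma^{\,j}\beta_0$, the path $\mathcal P_{(0,\beta_j)}$ ends at its point $P_q^{(j)}$ on the $x$-axis $l_q$, and the edge $l_1+P_q^{(j)}$ joins $P_q^{(j)}$ to the point $(0,\beta_{j+1})$, which is exactly the starting vertex of the next path $\mathcal P_{(0,\beta_{j+1})}$. Concatenating all $q-1$ paths through these $q-1$ gluing edges produces a closed walk $W$. Since the paths partition $\pi\setminus\{\mathcal O\}$ into $q-1$ classes of $q+1$ points each, $W$ visits $(q-1)(q+1)=q^2-1$ points and uses $q(q-1)$ path-edges together with $q-1$ gluing edges, that is $q^2-1$ lines in total; provided $W$ is a genuine cycle, this exhibits $C_{q^2-1}\hookrightarrow\pi$.

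What remains, and this is really the only thing to check, is that $W$ is a legitimate embedded cycle, i.e. that $\psi$ is injective on vertices and $\overline\psi$ injective on edges. Vertex-injectivity is immediate from the quoted fact that the paths are pairwise point-disjoint, so the $q^2-1$ vertices of $W$ are distinct. For edge-injectivity I would argue in two parts. The path-edges are pairwise distinct lines (the paths share no lines) and, as noted before the lemma, none of them lies in the parallel class of $l_1$; the gluing edges, on the other hand, all lie in the parallel class of $l_1$, so no gluing edge can coincide with a path-edge. Finally, the $q-1$ gluing edges are mutually distinct: each is a line of the fixed slope of $l_1$ meeting the $x$-axis in the point $P_q^{(j)}$, and these base points are distinct because the paths are point-disjoint, so these parallel lines through distinct points cannot coincide. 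Hence all $q^2-1$ edges of $W$ are distinct lines, $W$ is a single $(q^2-1)$-cycle, and the embedding is complete. The crux of the argument is thus the orbit count in the first step: primitivity of $\gamma$ is precisely what forces the $q-1$ paths to link into one cycle rather than breaking up into several shorter ones (an orbit of $\gamma$ of size $d$ would instead yield a $C_{d(q+1)}$), so the ``hard part'' here is conceptual rather than computational.
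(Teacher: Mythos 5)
Your proof is correct and takes essentially the same route as the paper's: primitivity of $\gamma$ makes the return map $\beta \mapsto \gamma\beta$ on $l_0\setminus\{\mathcal{O}\}\cong\F^*$ a single $(q-1)$-cycle, so the $q-1$ pairwise point- and line-disjoint paths $\mathcal{P}_{P_0}$ glue via lines parallel to $l_1$ into one cycle of length $(q-1)(q+1)=q^2-1$. The only difference is that you make explicit the vertex- and edge-injectivity checks (path-disjointness, the unused parallel class of $l_1$, and distinctness of the gluing lines) that the paper leaves implicit by citing \cite{LMV12}.
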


\begin{proof}
Having $\gamma$ being primitive means that the `sequential action' of $\alpha$ on $\F^*$ yields the cycle
\[
\beta \xrightarrow{~\alpha~} \gamma \beta \xrightarrow{~\alpha~} \gamma^2 \beta \xrightarrow{~\alpha~} \cdots \xrightarrow{~\alpha~} \gamma^{q-2} \beta \xrightarrow{~\alpha~} \gamma^{q-1}\beta =\beta
\] 
which runs through all the elements in $\F^*$. Hence, the paths of the form $\mathcal{P}_{P_0}$, when connected using lines parallel to $l_1$ create a cycle of length $q^2-1$.
\end{proof}

\begin{remark}\label{rem10million}
We have not been able to prove that the hypothesis in Theorem \ref{thmmain1}  must hold. However, we used \emph{Mathematica} to verify that such hypothesis does hold for all finite fields of order at most $10^6$. We then used \emph{Python} to check that such hypothesis also holds for fields of prime order $10^6\leq p \le 10^7$.
\end{remark}

\begin{remark}\label{remjustaplaceholder}
Different labelings of the lines through $\mathcal{O}$ in $\pi$ might yield different `$\gamma$s' in Theorem \ref{thmmain1}. For instance, 
\[
l_0: x=0 \hspace{.4in}  l_i: y=\alpha^i x, \ \ \text{for} \ 1 \leq i \leq (q-2) \hspace{.4in} l_{q-1}: y=0 \hspace{.4in} l_{q}: y=x
\]
yields
\[
\gamma' = \dfrac{\alpha-1}{(\alpha+1)^3}
\]
We tried many different labelings on the lines through $\mathcal{O}$, no other `interesting' $\gamma$s were found.
\end{remark}

If $\gamma$ obtained in Theorem \ref{thmmain1} were equal to $\gamma'$ in Remark \ref{remjustaplaceholder} we would get $3 \alpha=1$, which would mean that $char(\F)\neq 3$. Moreover, if we also assume $\gamma'=1$ then $(\alpha+1)^3 = \alpha -1$, which implies $\alpha^4  = -1$, and thus that $<\alpha>$ has order $8$, forcing $\F =GF(9)$. But this  is impossible because $char(\F)\neq 3$. Hence, $\gamma \neq 1$ or $\gamma' \neq 1$. It follows that one may choose a coordinatization of $\pi$ such that $P_0\neq Q_0$, and thus that our construction may be always assumed to create cycles of length at least $2(q+1)$.

\begin{lemma}
Let $q=2^a$, where $a\in \mathbb{N}$, $a>1$, and $\gamma'$ is the element obtained in Remark \ref{remjustaplaceholder}. Then, there is a primitive element $\alpha$ of $\F=GF(q)$ such that $\gamma'$ is also a primitive element.
\end{lemma}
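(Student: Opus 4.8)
The plan is to first exploit characteristic $2$ to collapse $\gamma'$ into something transparent. Since $-1=1$ in $\F$, we have $\alpha-1=\alpha+1$, so
\[
\gamma'=\frac{\alpha-1}{(\alpha+1)^3}=\frac{\alpha+1}{(\alpha+1)^3}=(\alpha+1)^{-2}.
\]
Now $|\F^*|=q-1=2^a-1$ is odd, so both inversion $x\mapsto x^{-1}$ and squaring $x\mapsto x^2$ are automorphisms of the cyclic group $\F^*$; in particular each carries generators to generators. Hence $\gamma'=\big((\alpha+1)^2\big)^{-1}$ is primitive if and only if $\alpha+1$ is primitive. This reduces the Lemma to the following purely field-theoretic statement: \emph{there is a primitive $\alpha\in\F$ such that $\alpha+1$ is also primitive}, i.e. a pair of primitive elements differing by $1$.

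To prove existence I would count. Write $\rho$ for the indicator function of the set of primitive elements, expanded through the multiplicative characters of $\F^*$,
\[
\rho(x)=\frac{\phi(q-1)}{q-1}\sum_{d\mid q-1}\frac{\mu(d)}{\phi(d)}\sum_{\mathrm{ord}(\chi)=d}\chi(x),
\]
with the convention $\chi(0)=0$ and $\phi,\mu$ the Euler and M\"obius functions. Let $N=\sum_{\alpha\in\F}\rho(\alpha)\,\rho(\alpha+1)$ be the number of $\alpha$ for which both $\alpha$ and $\alpha+1$ are primitive. Expanding the two copies of $\rho$ produces the inner sums $S(\chi_1,\chi_2)=\sum_{\alpha\in\F}\chi_1(\alpha)\chi_2(\alpha+1)$. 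The pair of trivial characters contributes the main term $\big(\tfrac{\phi(q-1)}{q-1}\big)^2(q-2)$, the factor $q-2$ counting all $\alpha\notin\{0,1\}$ (the only $\alpha$ with $\alpha=0$ or $\alpha+1=0$ in characteristic $2$).

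The key estimate is a Weil-type bound on $S(\chi_1,\chi_2)$ when $(\chi_1,\chi_2)$ is not the trivial pair. Here characteristic $2$ helps again: since $x+1=1-x$, each $S(\chi_1,\chi_2)$ is, up to the convention $\chi(0)=0$, a Jacobi sum, whence $|S(\chi_1,\chi_2)|\le\sqrt{q}$. Counting the $\phi(d)$ characters of each order $d$, the total weight of the non-trivial pairs works out to $4^{\,\omega(q-1)}-1$, where $\omega$ denotes the number of distinct prime divisors, giving
\[
N\ \ge\ \left(\frac{\phi(q-1)}{q-1}\right)^{2}\Big[(q-2)-\big(4^{\,\omega(q-1)}-1\big)\sqrt{q}\,\Big].
\]
Thus $N>0$, and the Lemma holds, as soon as $\sqrt{q}$ dominates $4^{\,\omega(q-1)}$.

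The main obstacle is exactly this last inequality for $q=2^a$. The crude bound $\omega(2^a-1)\le\log_2(2^a-1)$ is far too weak, but the standard estimate $\omega(n)=O(\log n/\log\log n)$ gives $4^{\,\omega(2^a-1)}=2^{o(a)}$, so $4^{\,\omega(q-1)}<\sqrt{q}=2^{a/2}$ for all sufficiently large $a$; making the threshold explicit via an effective bound on $\omega$ then yields the result for every $a$ beyond some small $a_0$. The finitely many remaining values $1<a\le a_0$ I would settle by direct computation, in the spirit of Remark \ref{rem10million} (for instance $q=4$ already works: take $\alpha$ a root of $x^2+x+1$, so that $\alpha+1=\alpha^2$ is again primitive). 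If one prefers to bypass the sieve threshold altogether, the reduced statement is an instance of the known existence of consecutive primitive elements and may simply be cited.
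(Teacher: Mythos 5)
Your reduction is exactly the paper's: in characteristic $2$ you rewrite $\gamma'=(\alpha+1)^{-2}$, then use that inversion preserves primitivity and that squaring does too because $\gcd(2,q-1)=1$, so everything hinges on producing a primitive $\alpha$ with $\alpha+1$ also primitive. The difference lies in how that last statement is handled. The paper simply cites it: the existence of consecutive primitive elements in $GF(2^a)$, $a>1$, is Golomb's conjecture, now a theorem (see the survey \cite{Cohen}), and the citation closes the proof. You instead sketch the standard character-sum proof of that theorem: Vinogradov's indicator for primitive elements, the identification of $S(\chi_1,\chi_2)$ with a Jacobi sum in characteristic $2$ and the bound $|S(\chi_1,\chi_2)|\le\sqrt q$, the weight $4^{\omega(q-1)}-1$ for the nontrivial character pairs, and the main term $\bigl(\tfrac{\phi(q-1)}{q-1}\bigr)^2(q-2)$ --- all of which is set up correctly. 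What this buys is self-containedness; what it costs is completeness: the inequality $\sqrt q>4^{\omega(q-1)}$ is not automatic for small $a$, and making the threshold $a_0$ effective and then disposing of every exponent $1<a\le a_0$ (where $a_0$ is on the order of several dozen under crude bounds on $\omega$) is a genuinely nontrivial amount of work --- it is, in essence, the published proof of the Golomb conjecture, not a routine appendix. Since you explicitly note at the end that the reduced statement ``may simply be cited,'' which is precisely what the paper does, your proposal is sound as a whole; but if you keep the sieve as the primary argument, you must either carry out the explicit threshold and the finite verification, or downgrade that portion to the same citation the paper uses.
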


\begin{proof}
When $q$ is even we obtain,
\[
\gamma' = \dfrac{1}{(\alpha+1)^2}
\]

One of the Golomb conjectures (a theorem now), says that if $q$ is even and larger than $2$ then there are two consecutive primitive elements, $\alpha$ and $\alpha+1$, of $\F$ (see, for example, the survey \cite{Cohen}). Next, since $q-1$ is odd then $(\alpha+1)^2$ is a primitive element because $\alpha+1$ is a primitive element and $\gcd(2, q-1)=1$.  Finally, we use that the inverse of a primitive element is also a primitive element to get that $ \dfrac{1}{(\alpha+1)^2}$ is a primitive element.
\end{proof}

\begin{remark}
We will say that  \emph{Hypothesis J} holds when $q$ is a power of $2$, or the hypothesis in Theorem \ref{thmmain1} holds. \\
As of now, we know that  \emph{Hypothesis J}  holds when If $q=2^a$, for some $a\in \mathbb{N}$, or  $q$ is an odd prime less than $10^7$, or $q$ is a power of an odd prime that is less than $10^6$.
\end{remark}

The following corollary is immediate. 

\begin{corollary}\label{HJ+Golomb}
If  \emph{Hypothesis J} holds, then $C_{q^2-1} \hookrightarrow  \pi$.
\end{corollary}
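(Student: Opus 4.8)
The plan is to unwind the definition of \emph{Hypothesis J}, which is a disjunction, and to treat its two disjuncts separately. In the first case, I would simply suppose that the hypothesis of Theorem \ref{thmmain1} holds, i.e.\ that there is a primitive $\alpha \in \F$ for which $\gamma = \frac{-\alpha}{(1-\alpha)(1+\alpha)^2}$ is also primitive. Then Theorem \ref{thmmain1} applies directly and yields $C_{q^2-1} \hookrightarrow \pi$, with nothing further to prove. All of the work therefore lies in the second case, namely when $q$ is a power of $2$, say $q = 2^a$.

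If $a = 1$, then $q^2 - 1 = 3$, and $C_3 \hookrightarrow \pi$ is immediate: by axiom (c) of an affine plane there are three non-collinear points, and the three lines joining them in pairs are distinct, so they furnish an embedding of $C_3$. (This subcase must be separated off because for $q = 2$ the only primitive element is $\alpha = 1$, at which the formula for $\gamma$ is undefined, and because the preceding lemma requires $a > 1$.) If instead $a > 1$, then the preceding lemma produces a primitive element $\alpha$ of $\F$ for which the quantity $\gamma' = \frac{\alpha - 1}{(\alpha+1)^3}$ of Remark \ref{remjustaplaceholder} is also primitive.

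At this point I would observe that the whole cycle-building machinery leading up to Theorem \ref{thmmain1} goes through with the alternative coordinatization of Remark \ref{remjustaplaceholder} in place of the original one; the only change is that $\F^*$ now acts on itself by $\beta \mapsto \gamma' \beta$ rather than by $\beta \mapsto \gamma \beta$. Concretely, I would re-run the computation of Lemma \ref{lem1} for the labeling $l_i : y = \alpha^i x$ ($1 \le i \le q-2$), $l_{q-1} : y = 0$, $l_q : y = x$, confirming, as asserted in Remark \ref{remjustaplaceholder}, that a path $\mathcal{P}_{P_0}$ starting at $(0,\beta)$ returns to $l_0$ at $(0, \gamma' \beta)$. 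Once this is established, the argument of Theorem \ref{thmmain1} applies verbatim: since $\gamma'$ is primitive, the sequential action runs through all of $\F^*$, gluing the $q-1$ paths into a single cycle of length $q^2 - 1$.

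The step I expect to require the most care is precisely this verification that the construction survives the relabeling, that is, that the path partition of $\pi \setminus \{\mathcal{O}\}$ and the closing-up by lines parallel to the first axis-line remain valid, and that Remark \ref{remjustaplaceholder}'s value of $\gamma'$ really is the multiplier of the induced action in the new coordinates. This amounts to a routine but not entirely mechanical recomputation of Lemma \ref{lem1}; everything else in the corollary is a direct citation of Theorem \ref{thmmain1} and of the preceding lemma, which is why the result is stated as immediate.
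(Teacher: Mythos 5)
Your proposal is correct and follows exactly the route the paper intends: the paper declares the corollary ``immediate'' precisely because it is the disjunction of Theorem \ref{thmmain1} (first disjunct of \emph{Hypothesis J}) with the Lemma plus the relabeled construction of Remark \ref{remjustaplaceholder} (second disjunct). Your added care---separating off $q=2$, where the Lemma does not apply and the formula for $\gamma'$ degenerates, and explicitly re-verifying Lemma \ref{lem1} under the new labeling---fills in details the paper leaves implicit, and is sound.
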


The largest possible cycle that could be embedded in $\pi$ has length $q^2$. We will construct such a cycle in $\pi$ by noticing that the $q^2-1$ cycles already constructed do not use any of the lines through $\mathcal{O}$. 

\begin{corollary}
If  \emph{Hypothesis J} holds, then $C_{q^2} \hookrightarrow  \pi$.
\end{corollary}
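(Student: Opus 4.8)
The plan is to build $C_{q^2}$ out of the cycle $C_{q^2-1}$ furnished by the previous corollary, by inserting the one missing point $\mathcal{O}$. Under \emph{Hypothesis J} that corollary gives an embedding of $C_{q^2-1}$ whose image is exactly $\pi\setminus\{\mathcal{O}\}$ (the $q-1$ paths of the form $\mathcal{P}_{P_0}$ partition the $q^2-1$ non-origin points, and the hypothesis stitches them into a single cycle). The structural fact I would single out, and which does all the work, is the one already noted before the statement: every edge of this cycle is a translate $l_i+Q$ of a line through the origin, and \emph{none} of these edges is itself a line through $\mathcal{O}$.

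First I would choose any edge $AB$ of the embedded $C_{q^2-1}$, where $A,B\in\pi\setminus\{\mathcal{O}\}$ are the images of two consecutive vertices and $AB$ denotes the line realizing that edge. Because $AB$ is not a line through $\mathcal{O}$, the three points $A$, $\mathcal{O}$, $B$ are not collinear. Consequently the line $\overline{A\mathcal{O}}$ through $A$ and $\mathcal{O}$ and the line $\overline{B\mathcal{O}}$ through $B$ and $\mathcal{O}$ are two distinct lines, each of them different from $AB$.

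Next I would subdivide: delete the edge $AB$ and add the new vertex $\mathcal{O}$ together with the two edges $\overline{A\mathcal{O}}$ and $\overline{B\mathcal{O}}$, replacing the local picture $\cdots - A - B - \cdots$ by $\cdots - A - \mathcal{O} - B - \cdots$. This keeps the graph a single cycle while raising the number of vertices (and edges) by one, so combinatorially the outcome is $C_{q^2}$. It then remains only to check that what we have is a genuine embedding in the sense of the definition. Vertex-injectivity is clear, since $\mathcal{O}$ was the unique point of $\pi$ not already in the image. For edge-injectivity, both new lines $\overline{A\mathcal{O}}$ and $\overline{B\mathcal{O}}$ pass through $\mathcal{O}$, whereas no retained edge of the original cycle does; hence neither new line coincides with any retained edge, and by the previous paragraph the two new lines are distinct from each other. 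Thus the induced map on edges stays injective, and we conclude $C_{q^2}\hookrightarrow\pi$.

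I do not anticipate a real obstacle here, which matches the paper's assertion that the corollary is immediate: the entire argument rests on the observation that the embedded $C_{q^2-1}$ avoids every line through $\mathcal{O}$, and this is precisely what guarantees that the two insertion edges $\overline{A\mathcal{O}}$ and $\overline{B\mathcal{O}}$ are still available. The only point requiring any care is edge-injectivity of the enlarged embedding, and that is settled by the very same observation.
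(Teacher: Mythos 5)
Your proof is correct and is essentially the paper's own argument: both delete one edge of the embedded $C_{q^2-1}$ and insert $\mathcal{O}$ by joining it to the two freed endpoints, with everything resting on the observation that the $(q^2-1)$-cycle uses no line through $\mathcal{O}$. The only cosmetic difference is that the paper fixes the adjacent pair to lie on $l_0$ and $l_1$ (so the inserted edges are literally $l_0$ and $l_1$), while you allow an arbitrary edge and spell out the injectivity checks explicitly.
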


\begin{proof}
Let $P$ and $Q$ be two points in $\pi$ that are adjacent in the embedding of $C_{q^2-1}$ in $\pi$ described in Corollary \ref{HJ+Golomb}. Assume, WLOG,  that $P\in l_0$ and $Q\in l_1$. We `disconnect' $P$ and $Q$ by eliminating the line that joins them, and then we connect each one of them with $\mathcal{O}$ by using $l_0$ and $l_1$. This new cycle has length $q^2$.
\end{proof}

As of now we have prove the embedding of cycles of length $q^2-1$ and $q^2$. What about shorter cycles?

\begin{theorem}\label{thmAGpan}
If  \emph{Hypothesis J} holds, then $\pi$ is pancyclic.
\end{theorem}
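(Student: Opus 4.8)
The plan is to fill in all the intermediate lengths by truncating the long cycle built above and closing it up again with a single extra line. Since $C_{q^2-1}$ and $C_{q^2}$ have already been produced, it suffices to embed $C_k$ for $3\le k\le q^2-2$. The key structural observation is that the $q^2-1$ edges of the long cycle of Corollary \ref{HJ+Golomb} are precisely the $q^2-1$ lines of $\pi$ that miss $\mathcal{O}$: each path $\mathcal{P}_{P_0}$ together with its connector uses exactly one line from each of the $q+1$ parallel classes, and none of these passes through $\mathcal{O}$. Consequently the only lines left unused are the $q+1$ lines through $\mathcal{O}$, and this is the resource we must spend when we close up a shorter cycle.

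Write the long cycle cyclically as $v_0,v_1,\ldots ,v_{N-1}$ with $N=q^2-1$ and $v_0=(0,\beta)$. For a target length $k$ I would take the sub-path $v_0,v_1,\ldots ,v_{k-1}$, which already carries $k-1$ of the long cycle's edges, and join its endpoints by one more line $c$. Using Lemma \ref{lem1} one checks that $v_i$ lies on the line $l_{\ell}$ through $\mathcal{O}$ with $\ell\equiv i \pmod{q+1}$, since within each segment the vertices $P_0,\ldots ,P_q$ run through $l_0,l_1,\ldots ,l_q$ in order; hence the closing line $c=\overline{v_{k-1}v_0}$ passes through $\mathcal{O}$ exactly when $k-1\equiv 0\pmod{q+1}$. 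In that ``milestone'' case $c$ is one of the unused lines through $\mathcal{O}$ and is automatically new, so we obtain $C_k$ for every $k\equiv 1\pmod{q+1}$ with no further work.

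For the remaining lengths $c$ is forced to be a line missing $\mathcal{O}$, so it coincides as a line with some edge $\overline{v_pv_{p+1}}$ of the long cycle; the truncated cycle is a valid embedding precisely when that edge lies in the complementary arc $v_{k-1},\ldots ,v_0$, i.e. when $p\notin\{0,\ldots ,k-2\}$. Here I would compute, from the coordinates in Lemma \ref{lem1}, the slope of $c$ and thereby locate the index $p$ of the coincident edge, and then argue that $p$ avoids the already-used positions. When the naive truncation starting at $v_0$ fails this test, there is still substantial freedom to exploit: one may start the sub-path at any of the $N$ vertices, traverse the long cycle in either direction, or pass to any of the $q-1$ translated copies $\mathcal{P}_{P_0}$. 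Moreover, for short cycles one can bypass the issue entirely by using the arc $\{(t,t^2):t\in\F\}$, which has no three collinear points and hence yields $C_k$ for all $3\le k\le q$ directly.

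The main obstacle is exactly this positional freshness of the closing line in the non-milestone range: showing that for every such $k$ at least one admissible choice of starting vertex and direction pushes the coincident edge out of the chosen sub-path. I expect this to reduce to the explicit slope computation above together with a counting (or symmetry) argument guaranteeing that the $k-1$ forbidden positions can always be avoided; the clean description $\ell(v_i)\equiv i\pmod{q+1}$ of the origin-line pattern should keep this bookkeeping manageable.
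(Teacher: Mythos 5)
Your structural observations are correct and nicely put: the $q^2-1$ edges of the long cycle of Corollary \ref{HJ+Golomb} are exactly the lines of $\pi$ missing $\mathcal{O}$, the vertex $v_i$ lies on $l_{i \bmod (q+1)}$, the chord closure works in the ``milestone'' case $k\equiv 1 \pmod{q+1}$ (the chord is then a line through $\mathcal{O}$, hence unused), and the arc $\{(t,t^2)\}$ disposes of $3\le k\le q$. But this leaves a genuine gap: for every non-milestone length in $q+1\le k\le q^2-2$ --- which is all but about $q-2$ of the lengths you must produce --- your text only describes what you \emph{would} do (``I would compute \ldots the slope of $c$'', ``I expect this to reduce to \ldots a counting (or symmetry) argument''). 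That unresolved step is the entire difficulty of your approach: since the chord $c=\overline{v_0v_{k-1}}$ misses $\mathcal{O}$, it is \emph{always} one of the edges of the long cycle, so the closure is never free, and you give no proof that some choice of starting vertex, direction, or translate pushes the coincident edge outside the window $\{0,\ldots,k-2\}$. Nothing in the proposal makes that plausible beyond hope; where the coincident edge sits depends on field arithmetic in a way that a crude count over $q^2-1$ windows does not obviously control. As written, this is a plan, not a proof.

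The irony is that the gap vanishes if you drop the insistence on closing the truncated path with a single chord, and this is exactly what the paper does. In the non-milestone case, take one fewer vertex from the long cycle, say $P_1,\ldots,P_{k-1}$, and adjoin $\mathcal{O}$ itself as the $k$-th vertex, closing with the two lines $\mathcal{O}P_1$ and $\mathcal{O}P_{k-1}$: these pass through $\mathcal{O}$, hence are not edges of the long cycle, and they are distinct from one another precisely because $P_1$ and $P_{k-1}$ lie on different lines through $\mathcal{O}$ --- which is (up to the indexing shift) the non-milestone condition. The two constructions are complementary: your easy case is the paper's exceptional case and vice versa. The paper handles its exceptional (collinear) case by a short detour that re-enters the long cycle $q+1$ steps later, re-using a line through $\mathcal{O}$ as a chord edge; your milestone closure would actually handle that case more cleanly. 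So your observations plus this one missing idea would yield a complete proof --- arguably tidier than the paper's --- but without it the proposal does not establish the theorem, and the range it does establish rigorously (milestone lengths, plus $k\le q$ from the arc, and note that $k=q+1$ is covered by neither) is a thin slice of what is claimed.
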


\begin{proof}
We only need to prove $C_{k} \hookrightarrow  \pi$, for all $3\leq k \leq q^2-2$. We know (see \cite{LMV09}) that $K_{q+1} \hookrightarrow  \pi$, and thus we get $C_{k} \hookrightarrow  \pi$, for all $3\leq k \leq q+1$. \\
For $q+2 \leq k \leq q^2-2$ we let 
\[
P_0 \rightarrow P_1 \rightarrow P_2 \rightarrow \cdots \rightarrow P_{q^2-3} \rightarrow P_{q^2-2} \rightarrow P_0
\]
be the $(q^2-1)$-cycle in Corollary \ref{HJ+Golomb}. \\
Let $k-1=(q+1)\lambda+r$, where $r,\lambda \in \mathbb{N}$ and $0\leq r<q+1$. We have two cases: \\
\textbf{(a)} If $r\neq 0$ then the vertices $P_1$ and $P_{k-1}$ are not on the same line through $\mathcal{O}$. Hence, the cycle
\[
\mathcal{O} \rightarrow P_1 \rightarrow P_2 \rightarrow \cdots \rightarrow P_{k-2} \rightarrow P_{k-1} \rightarrow \mathcal{O}
\]
is a $k$-cycle embedded in $\pi$. \\
\textbf{(b)} If $k-1=(q+1)\lambda$ then $\mathcal{O}$ and the vertices $P_1$ and $P_{k-1}$ are collinear. But neither $P_{k-3}$ nor $P_{k-2}$ are on the line joining $P_1$ and $\mathcal{O}$, as $q>1$. Also note that $\mathcal{O}$, $P_{k-2}$, and $P_{(k-3)+(q+1)+1}$ are collinear. \\
Since $k \leq q^2-2$ then $\lambda \leq q-2$, and thus $(k-3)+(q+1) +1 \leq q^2-1$. Hence, the cycle
\[
\mathcal{O} \rightarrow P_1 \rightarrow  \cdots \rightarrow  P_{k-3} \rightarrow P_{(k-3)+(q+1)} \rightarrow P_{(k-3)+(q+1)+1}  \rightarrow \mathcal{O}
\]
is an embedding of $C_k$ in $\pi$.
\end{proof}

We now move on to prove a result equivalent to Theorem \ref{thmAGpan} for projective planes. For the rest of the section we will denote $PG(2,q)$ by $\Pi$. \\

Since $\Pi$ is constructed from $AG(2,q)$ then we know that Theorem \ref{thmAGpan} also holds in $\Pi$. It is also known that $C_{q^2+q+1}$ embeds in $\Pi$, this cycle is constructed from the Singer cycle of the plane (see \cite{LMV09} and \cite{Singer38}). So, in order to get pancyclicity in $\Pi$ we only need to be able to embed $k$-cycles with length $q^2\leq k \leq q^2+q$. Our plan is to modify the embedding of $C_{q^2-1}$ in $AG(2,q)$ described in Corollary \ref{HJ+Golomb}. 

We first take the $(q^2-1)$-cycle embedded in $\Pi$ described in Corollary \ref{HJ+Golomb} and shorten it to get the following path on $q^2-q-1$ vertices. 
\[
\mathcal{P}: \ \ P_{1} \rightarrow P_2   \rightarrow  \cdots  \rightarrow P_{q^2-q-2}  \rightarrow P_{q^2-q-1} 
\]

Note that the $q+1$ affine points $P_{q^2-q}, P_{q^2-q+1}, \cdots, P_{q^2-2}$, and $P_0$ have not been used in this path. Now, since $P_{q^2-q} \in l_2$, $P_{q^2-q+1} \in l_3$, $\cdots$, $P_{q^2-2}\in l_q$, and $P_0\in l_0$, we will re-label these points (to make notation easier later on) as follows
\[
P_{q^2-q} =Q_2 \hspace{.4in} P_{q^2-q+1} = Q_3 \hspace{.1in} \cdots \hspace{.1in}  P_{q^2-2} = Q_q \hspace{.4in}  P_0=Q_0
\]
Hence, $Q_i \in l_i$, for all $i=2,3,\cdots, q, 0$. \\

The points of $\Pi$ not used in this path are: \\
\textbf{(a)} the $q+1$ points on  $\ell_{\infty}$: $\{ (0), (1), \cdots , (q-1), (q)  \}$, where $(i)$ is the point on $\ell_{\infty}$ incident with $l_i$, for all $i=0,1,\cdots, q$. \\
\textbf{(b)} the $q+1$ affine points $Q_{2}, Q_{3}, \cdots, Q_{q}, Q_0$, and $\mathcal{O}$.  \\

In terms of lines, we have not used: \\
\textbf{(a)} $\ell_{\infty}$, \\
\textbf{(b)} the $q+1$ lines through $\mathcal{O}$, \\
\textbf{(c)} The $q-1$ lines $m_i$, joining $Q_i$ and $Q_{i+1 \bmod q+1}$, for all $i= 2, 3, \cdots, q$. Also, the line $m_0$ connecting $Q_0$ with $P_1$, and the line $m$ joining $P_{q^2-q-1}$ and $Q_{2}$. All this yields $q+1$ lines not incident with $\mathcal{O}$. \\

Now we are ready to prove pancyclicity in $\Pi$.

\begin{theorem}
If  \emph{Hypothesis J} holds, then $\Pi$ is pancyclic.
\end{theorem}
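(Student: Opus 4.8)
The plan is to cut the problem down to a short range of lengths and then realize each remaining length by capping a fixed long path with a tunable ``zigzag'' through the line at infinity. Since $AG(2,q)$ sits inside $\Pi$ as the complement of $\ell_{\infty}$, Theorem \ref{thmAGpan} already gives $C_k \hookrightarrow \Pi$ for all $3 \le k \le q^2$, and the Singer cycle gives the Hamiltonian cycle $C_{q^2+q+1}\hookrightarrow \Pi$ (see \cite{Singer38,LMV09}). Hence it suffices to embed $C_k$ for
\[
q^2+1 \le k \le q^2+q .
\]
I would produce all $q$ of these lengths at once, using the path $\mathcal{P}$ on $q^2-q-1$ vertices together with the leftover points and lines catalogued above, the only freedom being how many points of $\ell_{\infty}$ the cap is forced to visit.

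The structural fact I would exploit is that the removed tail $P_{q^2-q-1}\to Q_2\to\cdots\to Q_q\to Q_0\to P_1$ consists of exactly $q+1$ consecutive edges of the $(q^2-1)$-cycle, that is, of exactly one full period of the cyclic sequence of parallel classes $[l_2],[l_3],\dots,[l_q],[l_0],[l_1]$ through which the construction runs. Consequently the freed lines $m,m_0,m_2,\dots,m_q$ lie in pairwise distinct parallel classes, so for each $i$ exactly one of them is parallel to $l_i$ and therefore passes through the point at infinity $(i)$. Thus every $(i)$ is incident with three unused lines: the spoke $l_i$ (which also carries $\mathcal{O}$ and $Q_i$), the line $\ell_{\infty}$, and one $m$-line, which I will call $n_i$ (and which also carries two of the $Q$'s or one path end). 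This is precisely what lets one thread many infinity points into a single cycle without ever re-using $\ell_{\infty}$.

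With this in hand the mechanism is a zigzag. Since $Q_i,(i)\in l_i$ and $Q_{i-1},(i)\in n_i$, the alternating walk
\[
Q_i \xrightarrow{\,l_i\,} (i) \xrightarrow{\,n_i\,} Q_{i-1} \xrightarrow{\,l_{i-1}\,} (i-1) \xrightarrow{\,n_{i-1}\,} Q_{i-2} \to \cdots
\]
visits a run of infinity points while advancing through the $Q$'s, using each spoke and each $m$-line at most once. To embed $C_{q^2+j}$ for a given $1\le j\le q$, I would route the cap from $P_{q^2-q-1}$ to $P_1$ so that it passes through all of $Q_2,\dots,Q_q,Q_0$ but picks up exactly $j+1$ points at infinity: run the zigzag along a segment of length $j+1$ and traverse the remaining $Q$'s along the direct chain $m_2,\dots,m_q$. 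This omits $\mathcal{O}$ and the other $q-j$ infinity points and yields a cycle of length $(q^2-q-1)+(q+j+1)=q^2+j$. The extreme $j=q$ gives the zigzag through all of $\ell_{\infty}$, a Hamiltonian cycle on $\Pi\setminus\{\mathcal{O}\}$, while small $j$ uses a short zigzag grafted onto the $Q$-chain.

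The part I expect to be delicate is the bookkeeping at the transitions: where the direct $Q$-chain meets the zigzag and where the cap attaches to the path ends $P_1$ and $P_{q^2-q-1}$. The zigzag runs \emph{backwards} through the $Q$'s while the chain runs \emph{forwards}, and the boundary infinity points $(0),(1),(2),(3)$ interact with the special lines $l_1$, $m_0=n_2$ and $m=n_3$, so orientations and endpoints must be matched with care. One must also decide when to spend the unique $\ell_{\infty}$ edge, for instance to close the run in the $j=q$ case, where no spoke is left free. The proof therefore reduces to exhibiting, uniformly in $j$, one explicit cap that uses each chosen line exactly once; once the two transitions are pinned down, checking that no line is repeated is a finite, if fussy, verification.
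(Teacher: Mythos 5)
Your proposal is correct in substance and is built on the same skeleton as the paper's proof: the same reduction (affine pancyclicity plus the Singer cycle leaves only $q^2+1\le k\le q^2+q$), the same shortened path $\mathcal{P}$, the same catalog of leftover points and lines, and the same zigzag mechanism alternating spokes $l_i$ with the freed $m$-lines through points of $\ell_\infty$ --- the paper's cap $Q_2\to(2)\to(3)\to Q_3\to(4)\to\cdots$ is exactly your zigzag read in the opposite direction. Where you genuinely differ is in how the lengths are enumerated: the paper builds one long cycle $\mathcal{C}$ and then performs surgery on it, repeatedly rerouting through $\mathcal{O}$, which produces the intermediate lengths in two parity families and leaves a special case at $q=3$; you never touch $\mathcal{O}$ and realize each length $q^2+j$ uniformly by tuning how much of the cap is zigzag. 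Your ``seam'' worry is the one real constraint: the detour from $Q_{i-1}$ into $(i)$ consumes $m_{i-2}$, which is precisely the direct link between $Q_{i-2}$ and $Q_{i-1}$, so a zigzag segment cannot immediately follow a direct segment; it must be launched either from a cap end (via $m$, $m_0$, or $l_1$) or across the double step $Q_t\xrightarrow{l_t}(t)\xrightarrow{\ell_\infty}(t+1)\xrightarrow{l_{t+1}}Q_{t+1}$, the same move the paper makes at $(2)\to(3)$. Once that is fixed, caps ending $Q_0\xrightarrow{m_0}P_1$ carry $2+(q-t)$ infinity points and give $j=1,\dots,q-1$, while appending the end detour $Q_0\xrightarrow{m_q}(1)\xrightarrow{l_1}P_1$ gives $j=2,\dots,q$, so all required lengths are covered and the verification you deferred does go through.

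A bonus of your formulation is worth recording: your $j=q$ cap is a genuine $(q^2+q)$-cycle (Hamiltonian on $\Pi\setminus\{\mathcal{O}\}$), whereas the paper's own count at the top end appears to be off by one --- its cap visits $q$ points $Q_i$ and only the $q-1$ infinity points $(2),\dots,(q)$, so its $\mathcal{C}$ is a $(q^2+q-2)$-cycle and the claimed $(q^2+q)$-cycle is really a $(q^2+q-1)$-cycle, leaving length $q^2+q$ unconstructed. Your uniform scheme, which threads $(0)$ and $(1)$ into the zigzag via $m_{q-1}$ and $m_q$ instead of spending $l_0,l_1$ on $\mathcal{O}$, repairs exactly that defect.
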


\begin{proof}
We will use the path (and information) described above. Recall that all that was based on the cycle described in Corollary \ref{HJ+Golomb}, which needs \emph{Hypothesis J}. Also recall that we only need to construct $k$-cycles, for $q^2\leq k \leq q^2+q$.

Note that $m_i$  is parallel to $l_{i+2 \bmod q+1}$, and thus goes through $(i+2\bmod q+1)$, for all $i= 2, 3, \cdots, q$. Similarly, $m_0$ goes through $(2)$, and the line joining $m$ is incident with $(3)$. Now note that the following two paths
\[
Q_2 \xrightarrow{l_2} (2) \xrightarrow{\ell_{\infty}} (3) \xrightarrow{l_3} Q_3   \xrightarrow{m_2} (4) \xrightarrow{l_4}  \cdots  \xrightarrow{m_{q-1}} (q) \xrightarrow{l_q} Q_{q} \xrightarrow{m_q} Q_0
\]
\[
Q_0 \xrightarrow{m_0} \underbrace{P_{1}   \rightarrow  \cdots  \rightarrow P_{q^2-q-1}}_{in \ \ \mathcal{P}} \xrightarrow{m} Q_2
\]
may be joined to create a $(q^2+q-1)$-cycle in $\Pi$. Let us call this cycle $\mathcal{C}$. \\
Since $l_0$, $l_1$, and $\mathcal{O}$ have not been used in $\mathcal{C}$, a slight modification of it allows us to get a $(q^2+q)$-cycle. That cycle is:
\[
Q_2 \xrightarrow{l_2} (2) \xrightarrow{\ell_{\infty}} (3) \xrightarrow{l_3}   \cdots   \xrightarrow{m_q} Q_0 \xrightarrow{l_0} \mathcal{O} \xrightarrow{l_1} \underbrace{P_{1}   \rightarrow  \cdots  \rightarrow P_{q^2-q-1}}_{in \ \ \mathcal{P}} \xrightarrow{m} Q_2
\]
Hence, we just need to construct $k$-cycles, for $q^2\leq k \leq q^2+q-2$.  \\
First notice that if, in $\mathcal{C}$, instead of the subpath 
\[
P_{q^2-q-1}  \xrightarrow{m}  Q_2 \xrightarrow{l_2} (2) \xrightarrow{\ell_{\infty}} (3)\xrightarrow{l_3} Q_3
\]
we had 
\[
P_{q^2-q-1}  \xrightarrow{m}  Q_2 \xrightarrow{l_2} \mathcal{O} \xrightarrow{l_3} Q_3
\]
we get a $(q^2+q-2)$-cycle in $\Pi$. \\
Similarly, now notice that if, in $\mathcal{C}$, instead of the subpath 
\[
P_{q^2-q-1}  \xrightarrow{m}  Q_2 \xrightarrow{l_2} (2) \xrightarrow{\ell_{\infty}} (3)
\]
we had 
\[
P_{q^2-q-1}  \xrightarrow{m}   (3)
\]
we get a $(q^2+q-3)$-cycle in $\Pi$ that does not use $\ell_{\infty}$. We call this cycle $\mathcal{C}'$. \\ 
Next, for any $i=4, \cdots, q$, we delete the path 
\[
(3) \xrightarrow{l_3} Q_3   \rightarrow  \cdots  \rightarrow (i) \xrightarrow{l_i} Q_{i} 
\]
from $\mathcal{C}'$ (keeping $(3)$ and $Q_i$ in $\mathcal{C}'$) and we connect both $(3)$ and $Q_i$ with $\mathcal{O}$, using $l_3$ and $l_i$, to get the cycle
\[
(3) \xrightarrow{l_3}  \mathcal{O} \xrightarrow{l_i}     \underbrace{Q_i  \rightarrow \cdots \rightarrow P_{q^2-q-1}}_{in \ \ \mathcal{C}'} \xrightarrow{m} (3)
\]
which has length $(q^2+q-3)-(2i-6)$. Since $i=4 \cdots, q$ this yields cycles of lengths $q^2+q-5, q^2+q-7, \cdots , q^2-q+3$.\\
Now, for each of these cycles (for each $i=4, \cdots, q$), replace
\[
(3) \xrightarrow{l_3}  \mathcal{O} \xrightarrow{l_i}   Q_i
\]
by 
\[
(3) \xrightarrow{\ell_{\infty}}  (2) \xrightarrow{l_{2}} O \xrightarrow{l_i}  Q_i
\]
to get cycles with length $q^2+q-4, \cdots , q^2-q+4$. \\
Hence, the only cycle left to be constructed would be a $q^2$-cycle, in the case that $q=3$. But,  this case is easy to handle by hand, with no need of using the arguments used in this proof.
\end{proof}

\section{Wheels and Gears}

The graphs studied in this section are all, in some way, related to cycles.  Embeddings of these graphs will often rely on first finding an embedding of a specific cycle and then embedding the additional vertices and edges that make up the graph.  Most of the cycles we will be interested in will be `short' but will have some extra desirable properties.  We will focus on results for projective planes although many of the constructions are generalizable to affine planes. 

Throughout this section, we will use the same notation in the previous section. We will use $\pi_q$ to denote a generic projective plane of order $q$. 

\vspace{.1in}

\subsection{Wheel graphs}\hspace{1in}

\noindent We define the wheel graph $W_n$ to be the graph on $n+1$ vertices formed by a cycle of length $n$ and one additional vertex, called the `center', that is adjacent to every vertex in the cycle.  Hence, the center of the wheel has degree $n$.  Since no vertex of a graph embedded in $\pi_q$ can contain a vertex of degree greater than $q+1$, we see immediately that $n \leq q+1$.  We now show by construction that $W_{q+1}$ can indeed by embedded in $\pi_q$.  

\begin{theorem}\label{thmwheels}
Let $\pi_q$ be a projective plane of order $q$. Then,  $W_n   \hookrightarrow  \pi_q$ if and only if $3\leq n \leq q+1$.
\end{theorem}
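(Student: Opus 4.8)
The plan is to prove the two directions of the equivalence separately. The ``only if'' direction is the easy observation already hinted at in the preamble: if $W_n \hookrightarrow \pi_q$, then the center vertex is mapped to a point that must be incident with $n$ distinct lines (the images of the $n$ spokes), and since every point of $\pi_q$ lies on exactly $q+1$ lines, we get $n \le q+1$. The lower bound $n \ge 3$ is forced by the definition of a cycle (and hence of $W_n$) requiring $n \ge 3$. So the real content is the ``if'' direction, the construction.

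For the ``if'' direction I would argue that it suffices to embed the largest wheel $W_{q+1}$, because once we have an embedded $W_{q+1}$ we can obtain every smaller $W_n$ with $3 \le n \le q+1$ by a shortening argument analogous to the pancyclicity proofs in Section~2. The cleanest way to organize the whole thing is: first choose the center $O$ to be any point of $\pi_q$; it lies on exactly $q+1$ lines $l_0, l_1, \ldots, l_q$, which will carry the spokes. Now I must place the $q+1$ rim vertices, one on each $l_i$ (and none equal to $O$), so that consecutive rim vertices are joined by lines distinct from the spokes. Concretely, pick rim vertex $R_i \in l_i \setminus \{O\}$; the rim edge $R_i R_{i+1}$ is automatically a line of $\pi_q$, and it is distinct from every spoke provided no rim edge passes through $O$, which holds because $R_i, R_{i+1}$ lie on two different lines through $O$ and hence the line $R_iR_{i+1}$ cannot also pass through $O$. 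The only genuine constraint is injectivity of $\overline{\psi}$ on the rim edges: we need the $q+1$ rim lines $R_0R_1, R_1R_2, \ldots, R_qR_0$ to be pairwise distinct and distinct from the spokes. A counting/degree argument handles most of this, but a careful choice of the $R_i$ is needed to rule out two rim edges coinciding.

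The natural way to make the construction uniform is to reuse the cycle machinery already available. Since every cycle $C_{q+1}$ embeds in $\pi_q$ (indeed $K_{q+1}$ embeds, as cited from \cite{LMV09}, giving all short cycles up through length $q+1$), I would start from an embedded $(q+1)$-cycle whose $q+1$ vertices happen to lie one on each of the $q+1$ lines through some common point $O$ not on the cycle. If such a ``nicely positioned'' cycle exists, adding $O$ as the center together with the $q+1$ spokes $OR_i$ immediately yields $W_{q+1}$, since the spokes are exactly the $q+1$ lines through $O$ and the rim edges avoid $O$ as argued above. For smaller $n$, I would take the embedded $W_{q+1}$ and collapse part of the rim: delete a contiguous block of rim vertices and their incident edges, and reconnect the two ends of the surviving rim by a single new line, keeping all surviving spokes; choosing how many vertices to delete lets me hit every rim length from $q+1$ down to $3$.

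The main obstacle is guaranteeing the existence of the ``nicely positioned'' $(q+1)$-cycle, that is, a $(q+1)$-cycle all of whose vertices lie on distinct lines through a fixed external point $O$ and whose $q+1$ rim lines are genuinely distinct. Existence of $C_{q+1}$ alone is not enough; I need the extra incidence structure relative to $O$. I expect to produce it directly: fix $O$, enumerate its lines $l_0, \ldots, l_q$, and select points $R_i \in l_i \setminus\{O\}$ greedily so that each new rim edge $R_{i-1}R_i$ is a previously unused line, which is feasible because at each step only finitely many lines are forbidden and each $l_i$ still has $q$ available points $\,(q \ge 2)$; the final closing edge $R_qR_0$ requires a small additional check that can be arranged by reserving freedom in the last one or two choices. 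Verifying that this greedy selection never gets stuck — i.e. that the forbidden configurations never exhaust a line $l_i$ — is the crux, and for a general (possibly non-Desarguesian) plane $\pi_q$ this must be done by counting incidences rather than by coordinates, since no field structure is available here.
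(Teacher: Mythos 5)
Your ``only if'' direction is correct, and your reduction of $W_{q+1}$ to a ``nicely positioned'' $(q+1)$-cycle around a point $O$ is sound as far as it goes; but the proposal has a genuine gap exactly where you yourself place the crux: you never verify that the greedy choice of rim points $R_i \in l_i$ can be completed, and with the invariant you state (``each new rim edge is a previously unused line'') the counting does not close. At the final step you must choose $R_q \in l_q$ so that the \emph{two} new edges $R_{q-1}R_q$ and $R_qR_0$ are distinct from each other and from the $q-1$ lines already used. Every used line that happens to pass through $R_{q-1}$ or through $R_0$ forbids one point of $l_q$, and nothing in your greedy process prevents such accidental incidences (for instance, $R_{j+1}$ may well have been chosen on the line $R_jR_0$, making the edge $R_jR_{j+1}$ pass through $R_0$). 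Together with $O$ and the point $l_q \cap R_{q-1}R_0$, this gives up to $(q-1)+1+1=q+1$ forbidden points on a line that has only $q+1$ points, so the greedy can in principle get stuck. Making it work requires a stronger invariant (e.g.\ forcing each new $R_i$ off \emph{all} previously used lines and off the line $R_{i-1}R_0$), which you do not supply. The same difficulty infects your reduction of smaller wheels to $W_{q+1}$: the new closing line $R_aR_b$ could coincide with a surviving rim edge whenever four rim vertices happen to be collinear, and your construction does not rule that out.

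For contrast, the paper sidesteps both problems. Small wheels come for free from complete graphs: $W_n$ is a subgraph of $K_{n+1}$, and $K_{q+1}\hookrightarrow \pi_q$ for $q$ odd, $K_{q+2}\hookrightarrow \pi_q$ for $q$ even (citing \cite{LMV09}), so only the case $q$ odd, $n=q+1$ needs a construction. There the rim vertices are placed alternately on two fixed lines: the points $P_1,P_3,\ldots,P_q$ on a line $\ell$ not through the center, and the points $Q_{2i}=\ell_{2i}\cap m$ on a line $m$ through $P_1$ (with one final vertex $T_{q+1}$ moved off $m$ so that the cycle can close). Distinctness of the rim lines then becomes automatic: if two rim edges coincided, that line would contain two of the $P_i$ and hence be $\ell$, but $\ell$ contains none of the $Q_j$. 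That structural trick is precisely what replaces the delicate incidence counting your greedy approach would have to carry out.
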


\begin{proof}
Since having $W_n   \hookrightarrow  \pi_q$ implies $n \leq q+1$, we proceed to construct wheels for all $n \leq q+1$. 

We know (see \cite{LMV09}) that $K_{q+1} \hookrightarrow  \pi_q$, when $q$ is odd, and $K_{q+2} \hookrightarrow  \pi_q$, when $q$ is even. This implies that our result is obtained except, maybe,  when $n=q+1$ and $q$ is odd. 

Assume $q$ is odd, and  let $\mathcal{O}$ be any point of $\pi_q$, and $\ell=\{P_1, \cdots, P_{q+1}\}$ be any line in $\pi_q$ not incident with $\mathcal{O}$. Denote by $\ell_i$ the line joining $\mathcal{O}$ and $P_i$.

The points $P_1,  P_3, P_5, \cdots , P_q$ are vertices on the $(q+1)$-cycle of our wheel. The edges connecting these points and $\mathcal{O}$ are the corresponding $\ell_i$'s. The other $(q+1)/2$ vertices will be taken from the other $\ell_{i}$'s. Firstly, we let $m$ be a line through $P_1$, different from $\ell$ and $\ell_1$. Now choose $Q_{2i}$ to be the point  on $\ell_{2i} \cap m$, for $i=1, \cdots , (q+1)/2$. Note that none of the $Q_i$'s can be on $\ell$. 

In order to create the path
\[
P_1 \rightarrow Q_2 \rightarrow  P_3 \rightarrow  Q_4 \rightarrow  \cdots \rightarrow  P_{q} 
\]
we need to show that the lines connecting $P_i$ with $Q_{i+1 \bmod q+1}$ and $Q_j$ with $P_{j+1 \bmod q+1}$ are all distinct. But this is clear because if the line connecting $P_i$ and $Q_{i+1 \bmod q+1}$ were equal to the line connecting $P_j$ and $Q_{j+1 \bmod q+1}$ then $P_i$ and $P_j$ would be on this line, and thus either $i=j$ which is a trivial case, or this line must be $\ell$, but $\ell$ does not contain any of the $Q_i$s. A similar argument shows that all the lines needed in this path are distinct. Now, if we wanted to extend this path into a cycle by joining $P_q$ with $Q_{q+1}$, as done above, we would run into the problem of having the line joining $Q_{q+1}$ with $P_1$ being $m$, which has already been used. So, we choose a point $T_{q+1} \in \ell_{q+1}$ such that the line $t$, through $T_{q+1}$ and $P_1$, is different from $m$ and $\ell$. Since every line used in the path above goes through a $P_i$ then the lines $t$ and $s$ (joining $T_{q+1}$ and $P_q$) are different from all others. We get the $(q+1)$-cycle
\[
P_1 \rightarrow Q_2 \rightarrow  P_3 \rightarrow  Q_4 \rightarrow  \cdots \rightarrow  P_{q} \xrightarrow{s}  T_{q+1} \xrightarrow{t} P_1
\]
Since, clearly all the vertices in the cycle connect to $\mathcal{O}$ by using different lines we get that the vertices $\{ P_1,  Q_2, P_3, Q_4, \cdots , P_{q}, Q_{q+1}, \mathcal{O}\}$  form an embedding of $W_{q+1}$, as desired.
\end{proof}

\vspace{.1in}

\subsection{Gear graphs} \hspace{1in}

\noindent  We define a \emph{gear} graph, $G_n$, to be a graph on $2n+1$ vertices and $3n$ edges.  The graph consists of a $2n$-cycle, and a `center' vertex that is adjacent to every other vertex in the $2n$-cycle.  Note that no gear graph can embed in $\pi_2$, since the smallest gear graph has $9$ edges and there are only $7$ lines in such plane.  For $q=3$ and $q=4$, the only possible embeddings are $G_3 \hookrightarrow \pi_3$, $G_3 \hookrightarrow \pi_4$, $G_4 \hookrightarrow \pi_4$, and $G_5 \hookrightarrow \pi_4$. These are all easy to check, and so no details will be provided in this article. From now on we assume $q>4$.

Since the center of $G_n$ has degree $n$ then we want to prove that $G_n \hookrightarrow \pi_q$, for all $3\leq n \leq q+1$, as long as $q>4$.

\begin{lemma}\label{lemmhalfofthegears}
$G_n \hookrightarrow \pi_q$, for all $3\leq n \leq \lfloor \frac{q+1}{2}\rfloor$.
\end{lemma}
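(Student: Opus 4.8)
The plan is to realize $G_n$ as a subgraph of a graph that is already known to embed, so that no new incidence configuration has to be built from scratch. The underlying principle is that an embedding restricts to an embedding of any subgraph: restricting an injective vertex map to a subset of vertices stays injective, and the induced edge map stays injective on a smaller edge set. Thus it suffices, for each $n$ in the stated range, to name a supergraph of $G_n$ that embeds in $\pi_q$. The two inputs I would lean on are the complete-graph embeddings $K_{q+1}\hookrightarrow\pi_q$ for $q$ odd and $K_{q+2}\hookrightarrow\pi_q$ for $q$ even (from \cite{LMV09}), and the wheel embedding $W_{q+1}\hookrightarrow\pi_q$, valid for all $q$, from Theorem \ref{thmwheels}.

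First I would dispose of everything except one boundary value using complete graphs. Since $G_n$ is a simple graph on $2n+1$ vertices, it is a subgraph of $K_{2n+1}$. When $q$ is even we have $n\le\lfloor\frac{q+1}{2}\rfloor=\frac{q}{2}$, so $2n+1\le q+1\le q+2$ and hence $G_n\subseteq K_{2n+1}\subseteq K_{q+2}\hookrightarrow\pi_q$. When $q$ is odd and $n\le\frac{q-1}{2}$ we have $2n+1\le q\le q+1$, so $G_n\subseteq K_{2n+1}\subseteq K_{q+1}\hookrightarrow\pi_q$. This already settles the lemma except for the single value $n=\frac{q+1}{2}$ with $q$ odd (which is exactly $\lfloor\frac{q+1}{2}\rfloor$), where $G_n$ has $q+2$ vertices and $K_{q+2}$ is unavailable because $q$ is odd.

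For that boundary case the key observation is that the $2n$-cycle of $G_{(q+1)/2}$ has length $q+1$, which is \emph{even} (as $q$ is odd), so its vertices split consistently into the $\frac{q+1}{2}$ vertices adjacent to the center and the $\frac{q+1}{2}$ vertices that are not. Consequently $G_{(q+1)/2}$ is exactly what is obtained from the wheel $W_{q+1}$ by deleting the spokes running to the non-adjacent rim vertices: both graphs carry the same $(q+1)$-cycle and the same center, and the spoke set of the gear is a subset of that of the wheel. Hence $G_{(q+1)/2}\subseteq W_{q+1}$, and since $W_{q+1}\hookrightarrow\pi_q$ by Theorem \ref{thmwheels}, the restriction argument yields $G_{(q+1)/2}\hookrightarrow\pi_q$.

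I expect the only genuine subtlety to be recognizing and handling this last case: the complete-graph bound misses $n=\frac{q+1}{2}$ for $q$ odd by exactly one vertex, and the right fix is to notice the subgraph relation to $W_{q+1}$ rather than to attempt a direct point-by-point construction of the gear. Everything else is bookkeeping on the inequalities $2n+1\le q+1$ and $2n+1\le q+2$ together with the fact that embeddings pass to subgraphs. I would also quickly verify that the standing assumptions $q>4$ and $n\ge 3$ keep every admissible $n$ inside these bounds, so that the three cases above genuinely cover the whole range $3\le n\le\lfloor\frac{q+1}{2}\rfloor$.
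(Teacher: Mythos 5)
Your proposal is correct, but it is organized differently from the paper's proof, which is a one-line argument: for every $n$ in the range, the gear $G_n$ (a $2n$-cycle plus a center joined to every other rim vertex) is a subgraph of the wheel $W_{2n}$ (same $2n$-cycle, center joined to \emph{all} rim vertices), and $2n \leq q+1$ throughout the range, so Theorem \ref{thmwheels} finishes everything at once via the subgraph-restriction principle. You invoke that same gear-inside-wheel observation, but only at the single boundary value $n=\frac{q+1}{2}$ with $q$ odd, handling all other values through $G_n \subseteq K_{2n+1}$ and the complete-graph embeddings $K_{q+1}\hookrightarrow\pi_q$ ($q$ odd), $K_{q+2}\hookrightarrow\pi_q$ ($q$ even) from \cite{LMV09}. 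Both routes are valid; the paper's is shorter and uniform, with no case split on the parity of $q$, while yours makes explicit exactly where the wheel theorem is indispensable --- at $n=\frac{q+1}{2}$ for odd $q$, where the gear has $q+2$ vertices and the complete-graph route breaks down (an embedded $K_{q+2}$ would force its vertices to form a $(q+2)$-arc, which exists only for even $q$). One stylistic remark: since the wheel argument alone covers the entire range, your complete-graph cases are logically redundant, though they do have the minor virtue of not depending on the odd-$q$ construction in the proof of Theorem \ref{thmwheels}, only on the cited clique embeddings.
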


\begin{proof}
Note that if $n$ is even, then $G_{n/2}$ is a subgraph of $W_n$. The result follows from the fact that $W_n   \hookrightarrow  \pi_q$ for all $3\leq n \leq q+1$ (Theorem \ref{thmwheels}).
\end{proof}

In order to embed larger gears in $\pi_q$ we will need to construct a very specific family of cycles, and then create the appropriate gears. This is all described in the proof of our next result.  

\begin{theorem}\label{mgear}
$G_n\hookrightarrow \pi_q$ for all $3\leq n \leq q$.
\end{theorem}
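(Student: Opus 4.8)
The plan is to construct an embedding of $G_n$ for the range $\lfloor \frac{q+1}{2}\rfloor < n \leq q$ that is not already covered by Lemma \ref{lemmhalfofthegears}, since the wheel-subgraph trick only reaches gears whose center degree is at most $\lfloor \frac{q+1}{2}\rfloor$. The essential difficulty is that $G_n$ has a $2n$-cycle together with a center of degree $n$, so the center $\mathcal{O}$ must connect to exactly $n$ alternating vertices of the cycle through $n$ distinct lines (at most $q+1$ are available through any point, so $n \leq q$ is forced), while the remaining $n$ vertices of the cycle — the `valley' vertices not adjacent to the center — must be joined to their two neighbors by lines that avoid all previously used lines. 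So I would first fix $\mathcal{O}$ as the center and a line $\ell$ not through $\mathcal{O}$, take the pencil of $q+1$ lines $\ell_1, \ldots, \ell_{q+1}$ through $\mathcal{O}$, and designate $n$ of the intersection points $P_i = \ell \cap \ell_i$ as the `spoke' vertices that will be adjacent to $\mathcal{O}$.

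Next I would place the $n$ intermediate (valley) vertices. The natural idea, mirroring the wheel construction in Theorem \ref{thmwheels}, is to pick an auxiliary line $m$ through one of the spoke points and place each valley vertex $Q_j$ as the intersection of $m$ with one of the spoke lines $\ell_k$ not yet assigned to a spoke vertex. The key bookkeeping is to check that the $2n$ edges of the rim cycle
\[
P_{1} \rightarrow Q_{1} \rightarrow P_{2} \rightarrow Q_{2} \rightarrow \cdots \rightarrow P_{n} \rightarrow Q_{n} \rightarrow P_{1}
\]
use $2n$ pairwise distinct lines, none of which coincides with $\ell$, with any $\ell_i$, or with $m$, and that all $2n+1$ vertices are distinct. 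As in Theorem \ref{thmwheels}, two rim edges $P_iQ_j$ and $P_{i'}Q_{j'}$ coincide only if the line through them contains two spoke points, forcing it to be $\ell$; since no valley vertex lies on $\ell$, this cannot happen. The last rim edge closing the cycle back to $P_1$ requires special care, exactly as the points $T_{q+1}$ and lines $s,t$ did in the wheel proof, to avoid reusing $m$.

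The main obstacle I anticipate is a counting/availability issue: using $n$ spoke lines plus $\ell$ plus the auxiliary line $m$ already consumes lines, and each valley vertex $Q_j$ sits on a spoke line $\ell_k$, so when $n$ is close to $q$ the valley vertices must be distributed among the \emph{remaining} spoke lines, of which there are only $q+1-n$. When $n > \frac{q+1}{2}$ there are not enough `spare' spoke lines to hold all the valley vertices on a single auxiliary line $m$, so I expect the construction to require more than one auxiliary line, or to reuse spoke lines by placing both a spoke vertex and a valley vertex on the same $\ell_k$ at different points. Reconciling this with the requirement that $\mathcal{O}$ connect only to the $n$ designated spoke vertices (and to no valley vertex) is the delicate point: I would need each valley vertex to lie on a spoke line $\ell_k$ but \emph{not} be joined to $\mathcal{O}$ by an edge of the graph, which is fine for the embedding since $\mathcal{O} Q_j$ is simply not an edge of $G_n$. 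Once the vertex placement is fixed, verifying that the $3n$ edge-lines are distinct reduces to the same `two spoke points force $\ell$' argument together with the observation that spoke lines $\mathcal{O}P_i$ are distinct by construction; I would close by noting that $n \leq q$ is both the stated range and the point at which the spare-line count $q+1-n \geq 1$ remains nonnegative, with the extremal case $n = q+1$ deferred to a later result.
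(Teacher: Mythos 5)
Your proposal does not prove the theorem: everything beyond Lemma \ref{lemmhalfofthegears} is exactly the range $\lfloor \frac{q+1}{2}\rfloor < n \leq q$, and that is precisely the case your construction leaves unresolved. You correctly identify the obstacle — after assigning $n$ spoke lines, only $q+1-n$ pencil lines remain to host valley vertices on your auxiliary line $m$, and $q+1-n < n$ throughout this range — but you then only speculate about remedies (``more than one auxiliary line, or reuse spoke lines \ldots'') without carrying any of them out or verifying line-distinctness for them. A further problem is that your closing remark, that $q+1-n\geq 1$ suffices, does not repair this: your template needs $n$ spare pencil lines, not one. The obstacle is also largely self-inflicted: in a gear, unlike a wheel, the valley vertices are \emph{not} adjacent to the center, so nothing forces them onto unused pencil lines or onto a common auxiliary line at all; importing the wheel template is what creates the shortage. (Two smaller points: keeping all valley vertices on a single line $m$ through the spoke $P_1$ makes \emph{both} rim edges at $P_1$ equal to $m$, so more than the ``last'' edge needs the $T_{q+1}$-style fix; and your parenthetical claim that $n\leq q$ is ``forced'' by the pencil size is wrong — the degree bound only forces $n\leq q+1$, and Theorem \ref{maxgear} shows $G_{q+1}$ does embed.)

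The paper's proof takes a different route that sidesteps all of this. It splices two of the point- and line-disjoint paths $\mathcal{P}_{P_0}$ and $\mathcal{P}_{Q_0}$ constructed at the start of Section 2, suitably truncated, joining them through points at infinity such as $(0)$ and $(1)$ to form a $2n$-cycle (with a parity split on $n$, and $Q_0$ chosen carefully so the splicing lines are new). The crucial feature of those paths is that their $i$-th vertex lies on the pencil line $l_i$ through $\mathcal{O}$ while every edge of the paths avoids $\mathcal{O}$; hence alternate vertices of the spliced cycle sit on pairwise distinct, completely unused lines through $\mathcal{O}$, and the $n$ spokes can be drawn at once. If you wish to salvage your own approach instead, the cleanest repair is to abandon the requirement that valley vertices lie on pencil lines and place them greedily as free points: at each step the forbidden lines through the two neighboring spokes number $O(1)$, so a counting argument leaves admissible choices for $Q_j$ when $q$ is large — but the small values of $q$ would then still require ad hoc treatment, which is work your proposal has not done.
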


\begin{proof} 
The discussion for $q=2,3,4$ was settled at the beginning of this subsection. For $q>4$,  Lemma \ref{lemmhalfofthegears} proves the theorem for all $3\leq n \leq \lfloor \frac{q+1}{2}\rfloor$. Also, it is easy to check that $G_4\hookrightarrow \pi_5$. For $n > \lfloor \frac{q+1}{2}\rfloor$ we will give explicit constructions.  Firstly, recall that two distinct paths
\[
\mathcal{P}_{P_0} : \ \ P_0 \rightarrow P_1 \rightarrow \cdots  \rightarrow P_{q} \hspace{1in} \mathcal{P}_{Q_0} : \ \ Q_1 \rightarrow Q_2 \rightarrow \cdots  \rightarrow Q_{q}
\]
constructed as described at the beginning of Section 2,  are disjoint in terms of both points and lines as long as $P_0\neq Q_0$ (two points different from $\mathcal{O}$ on $l_0$).  \\
From now on  let us fix $P_0$ and, given $n>4$, we will choose an appropriate $Q_0$ that will allow us to create a $(2n)$-cycle out of $\mathcal{P}_{P_0}$ and $\mathcal{P}_{Q_0}$. \\
Let $n$ be even. We shorten $\mathcal{P}_{P_0}$ to 
\[
\mathcal{P}_{P_0}': \ \ P_0   \rightarrow P_1   \rightarrow \cdots   \rightarrow    P_{n-2}
\]
Note that, since $n-2\leq q-2$, no lines parallel to $l_0$ or $l_1$ have been used in the construction of this path. Since $q>3$, there are at least $2$ lines through $(0)$ different from $l_0+P_{n-2}$,  $l_0+P_{1}$, and $\ell_{\infty}$, each of these lines intersect $l_1$ at a point different from $P_1$. Now, there are $q-1$ lines  through $(1)$ different from $l_1+P_{0}$ and $\ell_{\infty}$, each of these lines intersect $l_{n-1}$ at a point different from $l_{n-1}\cap(l_1+P_{0})$. We choose $Q_0\in l_0$ so that (the points on  $\mathcal{P}_{Q_0}$) $Q_{n-1} \neq l_{n-1}\cap(l_1+P_{0})$ and $Q_2 \neq P_2$ and $Q_2 \notin l_0+P_{n-2}$. We get the following $(2n)$-cycle. 
\[
(1) \xrightarrow{l_1+P_0}  \underbrace{P_0  \rightarrow \cdots \rightarrow P_{n-2}}_{in \ \ \mathcal{P}_{P_0}} \xrightarrow{l_0+P_{n-2}} (0) \xrightarrow{l_0+Q_{1}}   \underbrace{Q_1  \rightarrow \cdots \rightarrow Q_{n-1}}_{in \ \ \mathcal{P}_{Q_0}} \xrightarrow{l_1+Q_{n-1}} (1)
\]
In order to create $G_n$ we join $\mathcal{O}$ with $P_0, P_2, \cdots , P_{n-2}, Q_1, Q_3, \cdots , Q_{n-1}$. \\

For when $n$ is odd we proceed similarly.  We consider the path 
\[
\mathcal{P}_{P_0}': \ \ P_0   \rightarrow P_1   \rightarrow \cdots   \rightarrow    P_{n-1}
\]
Notice that no lines through $(0)$ or $(1)$ have been used. \\
Note that $P_{n-3}$ must be different from one of  $(l_1+P_0)\cap l_{n-3}$ and $(l_n+P_0)\cap l_{n-3}$. We will say that $P_{n-3} \neq (l_k+P_0)\cap l_{n-3}$, where $k$ is either $0$ or $n$.  So, since $P_{n-3} \neq (l_k+P_0)\cap l_{n-3}$ we choose $Q_0$ so that $Q_{n-3} = (l_k+P_0)\cap l_{n-3}$. Now notice that no two `consecutive' lines in neither $\mathcal{P}_{P_0}$ nor $\mathcal{P}_{Q_0}$ can go through $Q_0$. Hence, also considering $l_0$, there are at least two lines through $Q_0$ that have not been used so far. Let $T$ be a point on $l_n$ such that $T$ is on one of the lines through $Q_0$ that are available, and $T \neq (l_0+P_{n-1}) \cap l_n$. Note that $m=\overleftrightarrow{Q_0T}$ cannot be equal to the lines $l_k+P_{0}$, or $l_0+T$.  We get the following $(2n)$-cycle
\[
Q_{n-3}  \xrightarrow{l_k+P_{0}}  \underbrace{P_0  \rightarrow \cdots \rightarrow P_{n-1}}_{in \ \ \mathcal{P}_{P_0}} \xrightarrow{l_0+P_{n-1}} (0) \xrightarrow{l_0+T} T \xrightarrow{m}  \underbrace{Q_0  \rightarrow \cdots \rightarrow Q_{n-3}}_{in \ \ \mathcal{P}_{Q_0}}
\]
Note that $l_0+T$ could be $\ell_{\infty}$. \\
In order to create $G_n$ we join $\mathcal{O}$ with $P_0, P_2, \cdots , P_{n-1}, T, Q_1, \cdots , Q_{n-4}$. \\
Note that we used $n>4$ in order to get that $1\leq n-4$, and thus that the last selection of vertices (connected with $\mathcal{O}$) made sense.
\end{proof}

Thus far we know we can embed gear graphs, $G_n$, where $n$ is any integer between $3$ and $q$.  The next step in embeddings of gear graphs is to determine the largest that can be embedded. 

\begin{theorem}\label{maxgear}
Let $q> 4$.  Then $G_{q+1} \hookrightarrow \pi_q$.  Furthermore, this is the largest gear that can be embedded in $\pi_q$.  
\end{theorem}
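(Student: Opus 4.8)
The plan is to handle the two assertions separately: first the upper bound that $G_{q+1}$ is the largest embeddable gear, and then the explicit construction of $G_{q+1}$ itself. The upper bound is a quick degree count: the center of $G_n$ is adjacent to $n$ vertices of the $2n$-cycle, so $\mathcal{O}$ would need degree $n$ in the embedding. Since no vertex of a graph embedded in $\pi_q$ can have degree exceeding $q+1$ (there are exactly $q+1$ lines through any point), we must have $n \le q+1$. This immediately gives that $G_{q+1}$ is the largest possible gear, provided we can actually construct it.

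For the construction of $G_{q+1}$ I would try to push the method of Theorem \ref{mgear} to its limit. The idea there is to build a $2n$-cycle by gluing two of the disjoint paths $\mathcal{P}_{P_0}$ and $\mathcal{P}_{Q_0}$ from Section 2, and then to join the center $\mathcal{O}$ to every other vertex of this cycle. For $n=q+1$ we need a $(2q+2)$-cycle, which means we must use the paths $\mathcal{P}_{P_0}$ and $\mathcal{P}_{Q_0}$ essentially in their entirety (each path has $q+1$ vertices). First I would take the two full paths $P_0 \to \cdots \to P_q$ and $Q_0 \to \cdots \to Q_q$, recalling that they are disjoint in both points and lines whenever $P_0 \ne Q_0$, and then glue their endpoints together using lines that have not yet been consumed — the natural candidates are lines through the points at infinity $(0)$ and $(1)$, together with $l_0, l_1$ and $\ell_\infty$, since the path construction of Section 2 deliberately avoids lines parallel to $l_1$.

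The key step is then the alternating spoke assignment. Once the $(2q+2)$-cycle is in place, I must connect $\mathcal{O}$ to every second vertex, i.e. to $q+1$ vertices, and verify that the $q+1$ spoke-lines are all distinct and — crucially — distinct from the $2q+2$ lines already used in the cycle. Since $\mathcal{O}$ lies on exactly $q+1$ lines, and $G_{q+1}$ needs all $q+1$ of them as spokes, this is extremely tight: \emph{every} line through $\mathcal{O}$ must be used as a spoke and none of them may appear in the cycle. So the main obstacle will be arranging the cycle so that it uses no line through $\mathcal{O}$ at all, while the $q+1$ chosen cycle-vertices (every other one) lie on pairwise distinct lines through $\mathcal{O}$. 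I expect this to force careful choices of $Q_0$ and of the endpoint-gluing lines so that the parity of the cycle lines up with the lines $l_0, l_1, \ldots, l_q$ through $\mathcal{O}$; the edges of the two paths and the four gluing edges must all avoid $\mathcal{O}$, which they do by construction since they are parallel translates of the $l_i$ or lie along $\ell_\infty$, never through $\mathcal{O}$ itself.

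Concretely, I would select the $q+1$ spoke-endpoints to be one full set of alternating vertices of the $(2q+2)$-cycle, check that consecutive spoke-endpoints land on distinct lines $l_i$ through $\mathcal{O}$ (using that the $P_i$ lie successively on $l_1, l_2, \ldots$ and similarly for the $Q_i$, as computed via Lemma \ref{lem1}), and confirm the count gives exactly the $q+1$ distinct lines through $\mathcal{O}$. The delicate verification — and the part I would write out most carefully — is that the four ``gluing'' edges joining $\mathcal{O}$-free endpoints (through $(0)$, $(1)$, and along $\ell_\infty$) do not accidentally coincide with spoke-lines or with each other; this is where the hypothesis $q>4$ should enter, guaranteeing enough free lines through the relevant infinite points to complete all the required distinctness checks.
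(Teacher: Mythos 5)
Your upper bound argument (the center has degree $n$, so $n\le q+1$) is exactly the paper's and is fine. The gap is in the construction. If you take the two full paths $\mathcal{P}_{P_0}: P_0 \rightarrow \cdots \rightarrow P_q$ and $\mathcal{P}_{Q_0}: Q_0 \rightarrow \cdots \rightarrow Q_q$ and glue them into a $(2q+2)$-cycle, remember that $P_i$ and $Q_i$ lie on the \emph{same} line $l_i$ through $\mathcal{O}$. There are only two ways to glue the four endpoints. If you glue $P_q$ to $Q_0$ and $Q_q$ to $P_0$, then $Q_i$ occupies position $q+1+i$ in the cycle, so ``every other vertex'' selects the $P_i$ with $i$ in one parity class and the $Q_i$ with $i \equiv q+1 \pmod 2$. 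When $q$ is even these classes are complementary ($P$ even, $Q$ odd), and the spokes $l_0,\ldots,l_q$ come out distinct, so your plan can be pushed through (after checking the two gluing lines are not path edges). But when $q$ is odd the two classes coincide: the spoke endpoints are $P_0, P_2, \ldots, P_{q-1}$ together with $Q_0, Q_2, \ldots, Q_{q-1}$, so each line $l_i$ with $i$ even would have to serve as a spoke twice, and the embedding fails; shifting the alternation by one produces the same collision on the odd indices. The other gluing, $P_q$ to $Q_q$ and $P_0$ to $Q_0$, is impossible outright: each pair is collinear with $\mathcal{O}$, so the gluing edges would be $l_q$ and $l_0$ themselves, which cannot be spent on the rim since all $q+1$ lines through $\mathcal{O}$ are needed as spokes. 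So for odd $q$ no choice of $Q_0$ or of gluing lines repairs the approach. (Also note that with two full paths you need exactly two gluing edges, not four; inserting points at infinity as intermediate gluing vertices would make the rim length $2q+4$, too long.)

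The paper sidesteps all of this with a different rim: it builds the $(2q+2)$-cycle alternating between the $q+1$ points at infinity $(0), (1), \ldots, (q)$ and $q+1$ affine points $P_i \in l_i \setminus \{\mathcal{O}, (i)\}$, the rim edges being translates $l_j + P_i$, which never pass through $\mathcal{O}$; the spokes are then exactly the lines $l_i$ joining $\mathcal{O}$ to the points at infinity, so their distinctness is automatic by design rather than something to be engineered. The parity of $q$ still forces two cases, but only in how the rim closes up: for odd $q$ one extra affine point $T$ must be found, reachable from both $(0)$ and $(1)$ by unused lines, and this is precisely where $q>4$ enters, via the count $q(q-2) - 2(q-1) = q(q-4)+2 \ge q+2$. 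If you want to salvage your construction, you could keep it for even $q$, but you would still need a genuinely different idea, such as the paper's, to handle odd $q$.
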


\begin{proof} 
First, we notice that $G_{q+1}$ is the largest possible gear that can be embedded in $\pi_q$ because the degree of the center of $G_{q+1}$ is $q+1$, which is the largest allowed in $\pi_q$. \\
Now to show that $G_{q+1}$ actually embeds in $\pi_q$.  We need to construct a cycle of length $2(q+1)$ without using $\mathcal{O}$ and any of the $q+1$ lines through $\mathcal{O}$.  We need the cycle constructed in such a way that we are able to connect every other vertex of the cycle to the point $\mathcal{O}$. The construction of this cycle depends on the parity of the order of the plane.\\
\textbf{Case 1 ($q$ is even):} Assume $q$ is even. We will choose $q+1$ points $P_i\in l_i\setminus \{\mathcal{O}, (i)\}$, for all $i=0,1,\cdots , q$. Choose $P_1$ arbitrarily, next choose $P_3$ such that $P_3\notin l_2+P_1$, next choose $P_5$ such that $P_5\notin l_4+P_3$, etc. In general, choose $P_{i}\notin l_{i-1}+P_{i-2}$, for all $i=1,3,\cdots , q-1$ (only for $i$ odd). Now we choose $P_0$ such that $P_{0}\notin l_q+P_{q-1}$. As done before, we choose $P_{i}\notin l_{i-1}+P_{i-2}$, for all $i=2,3,\cdots , q$ (now only for $i$ even).  If for the chosen $P_q$ we get that $P_1\in l_0+P_q$ then we choose a different $P_q$. We are allowed to do this because the only condition to choose $P_q$ was that $P_{q}\notin l_{q-1}+P_{q-2}$. It follows that we get the $(2q+2)$-cycle:
\[
\hspace{-.5in} (0)  \xrightarrow{l_0+P_1} P_{1}  \xrightarrow{l_2+P_1} (2) \rightarrow \cdots \rightarrow P_{q-1} \xrightarrow{l_q+P_{q-1}} (q)  \xrightarrow{l_q+P_{0}} P_0  \xrightarrow{l_1+P_{0}} (1)  \cdots
\]
\[
\hspace{1.5in}  \cdots \rightarrow (1) \xrightarrow{l_1+P_{2}} P_2 \rightarrow \cdots \rightarrow (q-1) \xrightarrow{l_{q-1}+P_q} P_q \xrightarrow{l_0+P_q} (0)
\]
We obtain $G_{q+1}$ by joining $\mathcal{O}$ with $(0), (2), (4),\cdots (q), (1), (3), \cdots, (q-1)$ using the $q+1$ lines through $\mathcal{O}$. \\
Note that this proof also works for $q=4$. \\
\textbf{Case 2 ($q$ is odd):}  Here we do a similar construction to the one for $q$ even. We choose $P_1, P_3, \cdots , P_{q}$ as above, then we start by arbitrarily choosing $P_2$, and continue in the same fashion to get $P_2, P_4, \cdots, P_{q-1}$. In case $P_1\in l_q + P_{q-1}$ we choose a different $P_{q-1}$. Hence, the line $l_q+P_1$ has not been used yet. Now we want to find a point $T$ such that $T\notin l_0+P_q$, $T\notin l_1+P_2$, $T\notin l_0$, $T\notin l_1$, and that $T\neq P_i$, for $i=1, 2, \cdots, q$. This point will be connected to $(0)$ and $(1)$ to create the cycle we need. \\
There are $q-2$ lines through $(0)$ different from $l_0+P_q$, $l_0$, and $\ell_{\infty}$. These $q-2$ lines may be used to connect $(0)$ with $q(q-2)$ distinct points of $\pi_q$. Similarly, there are $q-2$ lines through $(1)$ different from $l_1+P_2$, $l_1$, and $\ell_{\infty}$. It follows that there are at least $q(q-2) - 2(q-1) = q(q-4)+2\geq q+2$ points that can be reached simultaneously by lines through $(0)$ or $(1)$, different from the $6$ avoided lines. Of these points, at most $q$ could be a $P_i$. Hence, there are at least two possibilities to choose $T$ from. \\
We get the following $(2q+2)$-cycle:
\[
\hspace{-.5in} (1)  \xrightarrow{l_1+P_2} P_{2}  \xrightarrow{l_3+P_2} (3) \rightarrow \cdots \rightarrow P_{q-1} \xrightarrow{l_q+P_{q-1}} (q)  \xrightarrow{l_q+P_{1}} P_1  \xrightarrow{l_2+P_{1}} (2)  \cdots
\]
\[
\hspace{.4in}  \cdots \rightarrow (2) \xrightarrow{l_2+P_{3}} P_3 \rightarrow \cdots \rightarrow (q-1) \xrightarrow{l_{q-1}+P_q} P_q \xrightarrow{l_0+P_q} (0)            \xrightarrow{l_0+T}        T       \xrightarrow{l_1+T} (1)
\]
We obtain $G_{q+1}$ by joining $\mathcal{O}$ with $(1), (3), \cdots, (q), (2), (4),\cdots (q-1), (0)$ using the $q+1$ lines through $\mathcal{O}$.
\end{proof}

\vspace{.1in}

The techniques used in  this article could also be used to study the embedding of other cycle-related graphs, such as helm graphs, prism graphs, etc. The results obtained are very similar to those in this article. We are currently trying to develop a theory of embeddings of graphs in finite projective planes and spaces.


\end{document}